\theoremstyle{plain} \numberwithin{equation}{section}
\newtheorem{theorem}{Theorem}[section]
\theoremstyle{definition}
\newtheorem{definition}[theorem]{Definition}
\newtheorem{example}[theorem]{Example}
\title{
Super-Walk Formulae for Even and Odd Laplacians in Finite Graphs
}
\author{
Chengzheng Yu\affiliation{Department of Mathematics, University of Illinois, Urbana, IL 61801, USA, cyu53@illinois.edu}
}
\abstract{
The number of walks from one vertex to another in a finite graph can be counted by the adjacency matrix. In this paper, we prove two theorems that connect the graph Laplacian with two types of walks in a graph. By defining two types of walks and giving orientation to a finite graph, one can easily count the number of the total signs of each kind of walk from one element to another of a fixed length.
}
\date{Spring 2017}
\begin{document}
\maketitle


\section{Introduction}
In graph theory and combinatorics, an interesting problem is that to find the number of different ways a certain operation can be performed on a given graph. For example, given two vertices $v_i$ and $v_j$, a natural question is: how many different walks of length $k$ connect $v_i$ and $v_j$. The answer to this question has interesting application to random walks, spectral graph theory, and surprisingly, discrete versions of quantum mechanics, in which such number of paths appear in the partition function for graph quantum mechanics \cite{Mnev1}. Such physical interpretation can be adapted to the case of super-symmetric quantum mechanics, for which both edges and vertices span the space of quantum states of the system.\\

The particular aim of this paper is to study generalized walks associated to a finite graph that we call of \textbf{super-walks} and \textbf{edge super-walks}. Super-walks differ from the conventional walks between vertices by the fact that staying in a vertex after choosing an edge is allowed, and that such walks come equipped with a natural orientation. Edge super-walks are in some way dual to the super-walks, in the sense that now the walks are considered to be between edges.\\

While powers of the adjacency matrix of a graph can be used to determine the number of conventional walks between vertices, we will show that another matrix associated with a graph, the even Laplacian $\Delta^+$, provides a counting formula for super-walks (Theorem~\ref{SW}). Similar expressions were proved via Feynman-type expansion of the graph Laplacian in the case of regular graphs by Mn\"ev \cite{Mnev1, Mnev2}, and for general finite graphs by del Vecchio \cite{delVeccio}. We provide a combinatorial argument for such formulae, using induction on the length of the super-walks.\\

Edge super-walks appear naturally while studying the super-symmetric version of graph quantum mechanics \cite{Boyan,delVeccio,Mnev1,Mnev2}, for which the walks are considered to be between edges rather than vertices. Theorem~\ref{ESW} provides a counting formula for edge super-walks in terms of the odd Laplacian $\Delta^-$. It turns out that the number of such walks depends on the orientation of the graph, and in Section~\ref{SESW} we will explain how.\\

This paper is organized as follows. In Section~\ref{Walks}, we introduce the different notions of walks and some examples. In Section~\ref{SSW}, super-walk is introduced for non-oriented graphs, and we prove the corresponding counting formula (Theorem~\ref{SW}). In Section \ref{SESW}, we introduce edge super-walks as a super-symmetric version of conventional walks, and we determine their counting formula (Theorem~\ref{ESW}). In the last section, we give an outlook of further directions and generalizations.\\

\section{Walks}\label{Walks}

We start by recalling the usual notion of walk in a finite graph. In this section, Theorem~\ref{walkthm} shows that the number of walks from vertices to vertices of fixed length can be counted by the corresponding adjacency matrix and its powers.\\

A \emph{graph} $\Gamma$ is an ordered pair $(V, E)$ comprising a set $V$ of vertices together with a set $E$ of edges, which are 2-element subsets of $V$ (that is an edge is associated with two vertices, and the association takes the form of the unordered pair of the vertices). If the set $V$ and the set $E$ are finite sets, we say $\Gamma$ is a \emph{finite graph}. A finite graph $\Gamma$ is often indicated by the \emph{adjacency matrix} which is defined by $A_{\Gamma}:=A(i,j) = \left\{\begin{matrix}
1 & \text{if } i\text{ is adjacent to }j\\ 
0 & \text{otherwise}
\end{matrix}\right.$.\\

The \emph{incidence matrix} $I$, the \emph{even graph Laplacian} $\Delta ^{+}_{\Gamma }$, and the \emph{odd graph Laplacian} $\Delta ^{-}_{\Gamma }$ are also used to indicate a graph. Let $\Gamma $ be an oriented graph, with vertices $v_{1},v_{2},\ldots,v_{|V|}$, and edges $e_{1},e_{2},\ldots,e_{|E|}$. The \emph{incidence matrix} $I$ of $\Gamma $ is a $|V|\times |E|$ matrix given by $I(i,j)=\left\{\begin{matrix}
1 & \text{if } e_{j} \text{ ends at } v_{i}\\ 
-1 & \text{if } e_{j} \text{ starts at } v_{i}\\ 
0 & \text{otherwise}
\end{matrix}\right.$. The even graph Laplacian is defined by $\Delta ^{+}_{\Gamma } :=II^{t}$. The odd graph Laplacian is defined by $\Delta ^{-}_{\Gamma } :=I^{t}I$.\\ 

\begin{definition}

Given a graph $\Gamma $ with vertices $v_{1}, v_{2}, \ldots, v_{|V|}$, and edges $e_{1}, e_{2}, \ldots, e_{|E|}$:

(1) A \emph{walk} of length 1 is a walk which starts at $v_{i}$, goes to one of its neighbor $v_{j}$. (*)

(2) A \emph{walk} of length $k$ is a walk which starts at $v_{i}$, repeats (*) $k$ times, and ends at $v_{j}$.

\end{definition}

\begin{example}\label{eg_1}

In Figure~\ref{Figure 1}: (1) $v_{1}\overset{e_{2}}{\rightarrow}v_{4}$ is a walk from $v_{1}$ to $v_{4}$ of length 1; (2) $v_{1}\overset{e_{1}}{\rightarrow}v_{2}\overset{e_{3}}{\rightarrow}v_{4}\overset{e_{4}}{\rightarrow}v_{5}\overset{e_{7}}{\rightarrow}v_{3}\overset{e_{6}}{\rightarrow}v_{6}$ and $v_{1}\overset{e_{2}}{\rightarrow}v_{4}\overset{e_{4}}{\rightarrow}v_{5}\overset{e_{5}}{\rightarrow}v_{6}\overset{e_{6}}{\rightarrow}v_{3}\overset{e_{6}}{\rightarrow}v_{6}$ are two walks from $v_{1}$ to $v_{6}$ of length 5.

\begin{figure}[h]
\begin{center}
	\begin{tikzpicture}
	\tikzset{
	mid arrow/.style={
	decoration={markings,mark=at position 0.5 with {\arrow[scale = 2]{>}}},
	postaction={decorate},
	shorten >=0.4pt}}
	\path (0,0) coordinate (X4); \fill (X4) circle (3pt);
	\path (0,1) coordinate (X1); \fill (X1) circle (3pt);
	\path (1,1) coordinate (X2); \fill (X2) circle (3pt);
	\path (1,0) coordinate (X5); \fill (X5) circle (3pt);
	\path (2,0) coordinate (X6); \fill (X6) circle (3pt);
	\path (2,1) coordinate (X3); \fill (X3) circle (3pt);
	\node[above left] at (X1) {$v_{1}$}; 
	\node[above right] at (X2) {$v_{2}$}; 
	\node[above right] at (X3) {$v_{3}$};
	\node[below left] at (X4) {$v_{4}$};
	\node[below] at (X5) {$v_{5}$};
	\node[below right] at (X6) {$v_{6}$};
	\draw (X1) -- (X2) node[midway,above]{$e_{1}$};
	\draw (X1) -- (X4) node[midway,left]{$e_{2}$};
	\draw (X2) -- (X4) node[midway,right]{$e_{3}$};
	\draw (X4) -- (X5) node[midway,below]{$e_{4}$};
	\draw (X5) -- (X6) node[midway,below]{$e_{5}$};
	\draw (X6) -- (X3) node[midway,right]{$e_{6}$};
	\draw (X3) -- (X5) node[midway,left]{$e_{7}$};
	\end{tikzpicture}
	\end{center}
	\caption{The graph used in Example~\ref{eg_1} and Example~\ref{eg_2}}
\label{Figure 1}\end{figure}
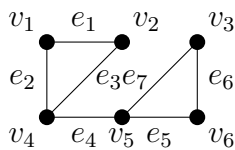

\end{example}

\begin{theorem}\label{walkthm}

Let $A_\Gamma$ be the adjacency matrix of a graph $\Gamma$. $A_\Gamma^k(i,j)$ is the number of \emph{walks} in $\Gamma $ from $v_{i}$ to $v_{j}$.

\end{theorem}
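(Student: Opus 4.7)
The plan is to proceed by induction on the walk length $k$, using the recursive structure of matrix multiplication to match the recursive structure of walks.

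For the base case $k=1$, I would appeal directly to the definition: $A_\Gamma(i,j)$ equals $1$ precisely when $v_i$ and $v_j$ are adjacent, and $0$ otherwise. Since a walk of length $1$ from $v_i$ to $v_j$ exists if and only if $v_j$ is a neighbor of $v_i$, and in that case there is exactly one such walk, the entry $A_\Gamma(i,j)$ is the number of walks of length $1$ from $v_i$ to $v_j$.

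For the inductive step, I would assume that $A_\Gamma^k(i,\ell)$ counts walks of length $k$ from $v_i$ to $v_\ell$ for every pair $(i,\ell)$, and then expand the matrix product
\begin{equation*}
A_\Gamma^{k+1}(i,j) \;=\; \sum_{\ell=1}^{|V|} A_\Gamma^k(i,\ell)\, A_\Gamma(\ell,j).
\end{equation*}
The key combinatorial observation is that every walk of length $k+1$ from $v_i$ to $v_j$ factors uniquely as a walk of length $k$ from $v_i$ to some intermediate vertex $v_\ell$, followed by a walk of length $1$ from $v_\ell$ to $v_j$. By the inductive hypothesis the first factor contributes $A_\Gamma^k(i,\ell)$, and by the base case the second factor contributes $A_\Gamma(\ell,j)$, so the product counts walks of length $k+1$ passing through $v_\ell$ at step $k$. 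Summing over $\ell$ partitions the set of walks of length $k+1$ from $v_i$ to $v_j$ according to their penultimate vertex, yielding exactly $A_\Gamma^{k+1}(i,j)$.

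There is no serious obstacle here: the argument is standard and the only point requiring any care is verifying that the factorization of a walk of length $k+1$ into a length-$k$ walk plus a last edge is indeed a bijection, which follows immediately from the definition of a walk as a sequence obtained by iterating the single-step move (*). Thus both the matrix identity and the combinatorial identity read off the same sum, completing the induction.
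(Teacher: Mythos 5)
Your proof is correct and is the standard argument; note that the paper itself states Theorem~\ref{walkthm} without proof, treating it as a recalled classical fact. Your induction on $k$ --- expanding $A_\Gamma^{k+1}(i,j)=\sum_\ell A_\Gamma^k(i,\ell)A_\Gamma(\ell,j)$ and partitioning walks by their penultimate vertex --- is precisely the template the paper uses for its own proofs of Theorems~\ref{SW} and~\ref{ESW}, so your approach is entirely consistent with the paper.
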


\section{Super-Walks}\label{SSW}

Theorem~\ref{walkthm} gives the combinatorial interpretation of the entries of powers of the adjacency matrix. It is also natural to consider the combinatorial interpretations of entries of graph Laplacians. In this section, we formally define super-walk, and Theorem~\ref{SW} shows that the total signs of super-walks of fixed length can be calculated by the corresponding even graph Laplacian and its powers.\\

\begin{definition}

Given a graph $\Gamma$ with vertices $v_{1}, v_{2}, \ldots, v_{|V|}$, and edges $e_{1}, e_{2}, \ldots, e_{|E|}$, we have:

A \emph{super-walk} of length 1 is a walk which starts at $v_{i_{1}}$, and: 

(1) goes to one of its neighbor $v_{i_{2}}$; or

(2) goes towards one of its neighbors $v_{i_{2}}$, but does not reach it, and goes back to $v_{i_{1}}$.

In case (1), we define the super-walk to have a negative sign, and in case (2), we define the super-walk to have a positive sign.

A \emph{super-walk} of length $k$ from $v_{i}$ to $v_{j}$ is formed by repeating (1) and/or (2) $k$ times, starting at $v_{i}$ and ending at $v_{j}$, where we determine the sign by multiplying together the signs of the steps.

\end{definition}

\begin{example}\label{eg_2}

In Figure~\ref{Figure 1}: (1) $v_{1}\overset{e_{1}}{\rightarrow}v_{2}$ is a super-walk from $v_{1}$ to $v_{2}$ of length 1; (2) $v_{1}\overset{e_{1}}{\rightarrow}v_{1}$ and $v_{1}\overset{e_{2}}{\rightarrow}v_{1}$ are super-walks from $v_{1}$ to $v_{1}$ of length 1; (3) $v_{5}\overset{e_{4}}{\rightarrow}v_{5}$, $v_{5}\overset{e_{5}}{\rightarrow}v_{5}$, and $v_{5}\overset{e_{7}}{\rightarrow}v_{5}$ are super-walks from $v_{5}$ to $v_{5}$ of length 1. Furthermore, we say (1) is a super-walk with negative sign, while (2) and (3) are super-walks with positive sign.

Moreover, (4) $v_{1}\overset{e_{1}}{\rightarrow}v_{2}\overset{e_{1}}{\rightarrow}v_{2}$, $v_{1}\overset{e_{1}}{\rightarrow}v_{2}\overset{e_{3}}{\rightarrow}v_{2}$, $v_{1}\overset{e_{1}}{\rightarrow}v_{1}\overset{e_{1}}{\rightarrow}v_{2}$, $v_{1}\overset{e_{2}}{\rightarrow}v_{1}\overset{e_{1}}{\rightarrow}v_{2}$, and $v_{1}\overset{e_{2}}{\rightarrow}v_{4}\overset{e_{3}}{\rightarrow}v_{2}$ are super-walks from $v_{1}$ to $v_{2}$ of length 2. Furthermore, $v_{1}\overset{e_{1}}{\rightarrow}v_{2}\overset{e_{1}}{\rightarrow}v_{2}$, $v_{1}\overset{e_{1}}{\rightarrow}v_{2}\overset{e_{3}}{\rightarrow}v_{2}$, $v_{1}\overset{e_{1}}{\rightarrow}v_{1}\overset{e_{1}}{\rightarrow}v_{2}$, $v_{1}\overset{e_{2}}{\rightarrow}v_{1}\overset{e_{1}}{\rightarrow}v_{2}$ have one negative sign and one positive sign, thus we define each them with a negative sign. While $v_{1}\overset{e_{2}}{\rightarrow}v_{4}\overset{e_{3}}{\rightarrow}v_{2}$ has two negative signs, thus we define it with a positive sign.

\end{example}

We now present one of our two main results, relating the number of super-walks with sign, to entries of power of the even Laplacian.\\

\begin{theorem}\label{SW}

Let $\Delta^+_\Gamma$ be the even Laplacian of a graph $\Gamma$. $(\Delta {^{+}_{\Gamma }})^{k}(i,j)=\sum_{\gamma ,i\rightarrow j,k} sgn(\gamma )$ where $\gamma ,i\rightarrow j,k$ is a super-walk $\gamma$ that starts at vertex $i$, ends at vertex $j$, and has length $k$.

\end{theorem}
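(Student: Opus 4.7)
The plan is to prove Theorem~\ref{SW} by induction on the length $k$, combined with the fact that the entries of the even Laplacian $\Delta^+_\Gamma = II^t$ already encode the signed count of length-one super-walks.

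First, I would settle the base case $k=1$. Expanding $\Delta^+_\Gamma(i,j) = \sum_e I(i,e)\,I(j,e)$ and examining a single edge $e$ with endpoints $v_a, v_b$, there are two cases. If $i \neq j$, then $I(i,e)\,I(j,e)$ is nonzero only when $\{i,j\}=\{a,b\}$, in which case exactly one of the two factors is $+1$ and the other is $-1$, contributing $-1$. If $i = j$, then $I(i,e)^2 = 1$ exactly when $e$ is incident to $v_i$ and $0$ otherwise. Consequently $\Delta^+_\Gamma(i,j)$ equals $-(\text{number of edges between }v_i, v_j)$ when $i\neq j$ and $\deg(v_i)$ when $i=j$; in particular, the formula is independent of the orientation chosen for $I$, just as the super-walk count is. The signed count of length-one super-walks from $v_i$ to $v_j$ is precisely the same: for $i\neq j$, each connecting edge gives one super-walk of type (1) with sign $-1$; for $i=j$, each incident edge gives one super-walk of type (2) with sign $+1$, and no type-(1) contributions exist because $\Gamma$ has no loops.

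Next, I would carry out the inductive step. Assume the formula holds at length $k$. By matrix multiplication,
\begin{equation*}
(\Delta^+_\Gamma)^{k+1}(i,j) \;=\; \sum_{\ell=1}^{|V|} (\Delta^+_\Gamma)^{k}(i,\ell)\,\Delta^+_\Gamma(\ell,j)
\;=\; \sum_{\ell} \Bigl(\sum_{\gamma_1:\,i\to\ell,\,k}\mathrm{sgn}(\gamma_1)\Bigr)\Bigl(\sum_{\gamma_2:\,\ell\to j,\,1}\mathrm{sgn}(\gamma_2)\Bigr).
\end{equation*}
The key combinatorial observation is that concatenation $\gamma_1 * \gamma_2$ sets up a bijection between length-$(k+1)$ super-walks from $v_i$ to $v_j$ and pairs $(\gamma_1,\gamma_2)$ consisting of a length-$k$ super-walk from $v_i$ to some intermediate $v_\ell$ followed by a length-$1$ super-walk from $v_\ell$ to $v_j$; the inverse cuts off the final step, with $v_\ell$ uniquely recovered as the endpoint after step $k$. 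Because the sign of a super-walk is defined multiplicatively over its steps, $\mathrm{sgn}(\gamma_1 * \gamma_2) = \mathrm{sgn}(\gamma_1)\,\mathrm{sgn}(\gamma_2)$, and the double sum above rearranges to $\sum_{\gamma:\,i\to j,\,k+1}\mathrm{sgn}(\gamma)$, completing the induction.

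I expect the principal subtlety to be bookkeeping in the base case rather than the induction: one must verify that the two sign conventions (type (1) negative, type (2) positive) precisely reproduce the off-diagonal $-1$'s and the diagonal degrees of $\Delta^+_\Gamma$, and that this matches regardless of how the edges were oriented when writing down $I$. Once this is in place, the induction is essentially a book-keeping exercise in interchanging a finite sum with the bijective splitting of a super-walk at its penultimate vertex.
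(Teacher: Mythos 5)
Your proposal is correct and follows essentially the same route as the paper: induction on $k$, with the base case checking that the diagonal and off-diagonal entries of $\Delta^+_\Gamma$ match the signed counts of length-one super-walks, and the inductive step decomposing a length-$(k+1)$ super-walk at its penultimate vertex via the matrix product $(\Delta^+_\Gamma)^{k}\cdot\Delta^+_\Gamma$. Your phrasing of the inductive step as an explicit sign-multiplicative bijection, and your derivation of the entries of $\Delta^+_\Gamma=II^t$ directly (noting orientation-independence), are somewhat more carefully stated than the paper's version but not a different argument.
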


\begin{proof}
Proof by induction on $k$. When $k=1$, we have: $(\Delta {^{+}_{\Gamma }})^{k}=(\Delta {^{+}_{\Gamma }})^{1}$.

Note that the even graph Laplacian can also be calculated by $$\Delta ^{+}_{\Gamma } :=\Delta ^{+}_{\Gamma }(i,j)=\left\{\begin{matrix}
\text{val}(i) & \text{if } i=j & (1)\\ 
-1 & \text{if } i\text{ is adjacent to }j & (2)\\ 
0 & \text{otherwise} & (3)
\end{matrix}\right.$$ where val$(i)$ is the \emph{valence} of a vertex $v$, which equals to the number of neighbors of $v$.

In case (1), $(\Delta ^{+}_{\Gamma })(i,i)=val(i)=$ the number of neighbors of $i=$ the number of super-walks of length 1 goes from vertex $i$ back to vertex $i$. Thus, the claim holds in case (1).

In case (2), $(\Delta ^{+}_{\Gamma })(i,j)=-1$ if $i$ is adjacent to $j$. In this case, there is only one super-walk from vertex $i$ to one of its neighbors vertex $j$, and the sign is negative. Thus, the claim holds in case (2).

In case (3), $(\Delta ^{+}_{\Gamma })(i,j)=0$ if vertex $i$ is not adjacent to vertex $j$. Since no super-walk of length 1 can go from vertex $i$ to vertex $j$, the claim holds in case (3).

Therefore, the claim holds when $k=1$.

Suppose the claim holds for some natural number $k$. Consider $k+1$.

\begin{align*}(\Delta {^{+}_{\Gamma }})^{k+1}&=(\Delta {^{+}_{\Gamma }})^{k}\cdot (\Delta {^{+}_{\Gamma }})^{1} \\ \text{So, }(\Delta {^{+}_{\Gamma }})^{k+1}(i,j)&=\sum_{q=1}^{n}(\Delta {^{+}_{\Gamma }})^{k}(i,q)(\Delta ^{+}_{\Gamma })(q,j) \\
&=(\Delta {^{+}_{\Gamma }})^{k}(i,j)\cdot (\Delta^{+} _{\Gamma })(j,j)+\overbrace{\sum_{q=1}^{n}(\Delta {^{+}_{\Gamma }})^{k}(i,q)\cdot (-1)}^{\text{where } q \text{ and } j \text{ adjacent}}+\overbrace{\sum_{q=1}^{n}(\Delta {^{+}_{\Gamma }})^{k}(i,q)\cdot 0}^{\text{where } q \text{ and } j \text{ non-adjacent}} \\ 
&=\sum_{\gamma ,i\rightarrow j,k} \text{sgn}(\gamma )\cdot \text{val}(j)+\sum_{\gamma ,i\rightarrow j,k} \text{sgn}(\gamma )\cdot (-1)+0.
\end{align*}

Let $\sum_{\gamma ,i\rightarrow j,k} \text{sgn}(\gamma )\cdot \text{val}(j)=X$. In $X$, we first count the super-walks from vertex $i$ to vertex $j$ of length $k$ by calculating $\sum_{\gamma ,i\rightarrow j,k} \text{sgn}(\gamma )$. Now we stand at vertex $j$, and have to go from vertex $j$ to vertex $j$ of length 1. Obviously, the number of the last step equals to val$(j)$. Since val$(j)$ is always non-negative, $X$ calculates $\sum_{\gamma} \text{sgn}(\gamma )$ where $\gamma$ is one of the super-walks from vertex $i$ to vertex $j$ of length $k+1$, and the last step going from vertex $j$ to vertex $j$.

Let $\sum_{\gamma ,i\rightarrow j,k} \text{sgn}(\gamma )\cdot (-1)=Y$. In $Y$, we first count the super-walks from vertex $i$ to vertex $q$ of length $k$ by calculating $\sum_{\gamma ,i\rightarrow q,k} \text{sgn}(\gamma )$. Now we stand at vertex $q$, a neighbor of vertex $j$, and have to go from vertex $q$ to vertex $j$ of length 1. There is only one way to go from vertex $q$ to vertex $j$ of length 1, and its sign is negative. That is $\sum_{\gamma ,q\rightarrow j,1} \text{sgn}(\gamma )=-1$. Hence, $Y$ calculates $\sum_{\gamma} \text{sgn}(\gamma )$ where $\gamma$ is one of the super-walks from vertex $i$ to vertex $j$ of length $k+1$, and the last step going from vertex $q$ to vertex $j$.

Therefore, $X+Y$ calculates $\sum_{\gamma ,i\rightarrow j,k+1} \text{sgn}(\gamma )$. Hence, the claim holds when $k$ is replaced by $k+1$.

This is the end of the proof.

\end{proof}

\section{Edge Super-Walks}\label{SESW}

In this section, we define edge super-walk. Similar to the super-walk, edge super-walk is a sign-sensitive edge-to-edge walk in an oriented graph. Particularly, the sign of edge super-walk depends on not only the way each step goes, but also the orientation of the graph, which is defined in Definition~\ref{signdef} in this section. Theorem~\ref{ESW} shows that the total signs of edge super-walks of fixed length can be calculated by the corresponding odd graph Laplacian and its powers.\\

\begin{definition}\label{signdef}

Given an oriented graph $\Gamma$. Let $i$ be an edge in $\Gamma$ which starts at vertice $A$ and ends at vertice $B$. We say \emph{the sign of $A$ in $i$ }is negative, and \emph{the sign of $B$ in $i$ }is positive.

\end{definition}

\begin{example}\label{eg_3}

In the figure~\ref{Figure 2}, we say that the sign of $v_{5}$ in $e_{4}$ is positive; the sign of $v_{5}$ in $e_{5}$ in negative; the sign of $v_{5}$ in $e_{7}$ is positive.

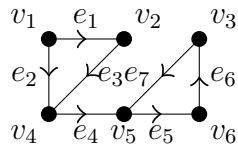
\begin{figure}[h]
\begin{center}
	\begin{tikzpicture}
	\tikzset{
	mid arrow/.style={
	decoration={markings,mark=at position 0.5 with {\arrow[scale = 2]{>}}},
	postaction={decorate},
	shorten >=0.4pt}}
	\path (0,0) coordinate (X4); \fill (X4) circle (3pt);
	\path (0,1) coordinate (X1); \fill (X1) circle (3pt);
	\path (1,1) coordinate (X2); \fill (X2) circle (3pt);
	\path (1,0) coordinate (X5); \fill (X5) circle (3pt);
	\path (2,0) coordinate (X6); \fill (X6) circle (3pt);
	\path (2,1) coordinate (X3); \fill (X3) circle (3pt);
	\node[above left] at (X1) {$v_{1}$}; 
	\node[above right] at (X2) {$v_{2}$}; 
	\node[above right] at (X3) {$v_{3}$};
	\node[below left] at (X4) {$v_{4}$};
	\node[below] at (X5) {$v_{5}$};
	\node[below right] at (X6) {$v_{6}$};
	\draw[mid arrow] (X1) -- (X2) node[midway,above]{$e_{1}$};
	\draw[mid arrow] (X1) -- (X4) node[midway,left]{$e_{2}$};
	\draw[mid arrow] (X2) -- (X4) node[midway,right]{$e_{3}$};
	\draw[mid arrow] (X4) -- (X5) node[midway,below]{$e_{4}$};
	\draw[mid arrow] (X5) -- (X6) node[midway,below]{$e_{5}$};
	\draw[mid arrow] (X6) -- (X3) node[midway,right]{$e_{6}$};
	\draw[mid arrow] (X3) -- (X5) node[midway,left]{$e_{7}$};
	\end{tikzpicture}
	\end{center}
	\caption{The graph used in Example~\ref{eg_3} and Example~\ref{eg_4}}
\label{Figure 2}\end{figure}

\end{example}

\begin{definition}

Given an oriented graph $\Gamma$ with vertices $v_{1}, v_{2}, \ldots, v_{|V|}$, and edges $e_{1}, e_{2}, \ldots, e_{|E|}$, we have:

An \emph{edge super-walk} of length 1 is a walk starts at $e_{i_{1}}$, and: 

(1) goes to one of its neighbor edge $e_{i_{2}}$ through the vertex $v_{i}$, where $e_{i_{1}}$ and $e_{i_{2}}$ intersect at $v_{i}$; or

(2) goes through one of the end points and back to itself.

In case (1), if the sign of $v_{i}$ in $e_{i_{1}}$ and the sign of $v_{i}$ in $e_{i_{2}}$ are the same, we define this walk to have a positive sign; if not, we define this walk to have a negative sign. In case (2), the walk is defined to have a positive sign.

An \emph{edge super-walk} of length $k$ from $e_{i}$ to $e_{j}$ is formed by repeating (1) and/or (2) $k$ times, starting at $e_{i}$ and ending at $e_{j}$, where we determine the sign by multiplying together the signs of the steps.

\end{definition}

\begin{example}\label{eg_4}

In Figure~\ref{Figure 2}: (1) $e_{1}\overset{v_{1}}{\rightarrow}e_{2}$ is an edge super-walk from $e_{1}$ to $e_{2}$ of length 1; (2) $e_{1}\overset{v_{1}}{\rightarrow}e_{1}$ and $e_{1}\overset{v_{2}}{\rightarrow}e_{1}$ are edge super-walks from $e_{1}$ to $e_{1}$ of length 1. Furthermore, we say (1) is an edge super-walk with a negative sign, while both walks in (2) have a positive sign.

Moreover, $e_{1}\overset{v_{1}}{\rightarrow}e_{1}\overset{v_{1}}{\rightarrow}e_{2}$, $e_{1}\overset{v_{2}}{\rightarrow}e_{1}\overset{v_{1}}{\rightarrow}e_{2}$, $e_{1}\overset{v_{1}}{\rightarrow}e_{2}\overset{v_{1}}{\rightarrow}e_{2}$, $e_{1}\overset{v_{1}}{\rightarrow}e_{2}\overset{v_{4}}{\rightarrow}e_{2}$, and $e_{1}\overset{v_{2}}{\rightarrow}e_{3}\overset{v_{4}}{\rightarrow}e_{2}$ are edge super-walks from $e_{1}$ to $e_{2}$ of length 2. Furthermore, all of the first four edge super-walks have a positive sign, while the last one has a negative sign.

\end{example}

We now present the other main result, relating the number of edge super-walks with sign, to entries of power of the odd Laplacian.\\

\begin{theorem}\label{ESW}

Let $\Delta^-_\Gamma$ be the odd Laplacian of a graph $\Gamma$. $(\Delta _{\Gamma }^{-})^{k}=\sum_{\gamma ,i\rightarrow j,k} sgn(\gamma )$ where $\gamma ,i\rightarrow j,k$ is an edge super-walk $\gamma$ that starts at edge $i$, ends at edge $j$, and has length $k$.

\end{theorem}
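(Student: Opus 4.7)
The plan is to mirror the inductive strategy used in Theorem~\ref{SW}, replacing the even Laplacian by the odd Laplacian and the vertex-based super-walks by edge super-walks. I first induct on the length $k$ and use the factorization $(\Delta^-_\Gamma)^{k+1}(i,j) = \sum_{q} (\Delta^-_\Gamma)^{k}(i,q)\,\Delta^-_\Gamma(q,j)$ to peel off the last step of a length-$(k+1)$ edge super-walk. The sign conventions in Definition~\ref{signdef} were chosen precisely so that $\Delta^-_\Gamma = I^t I$ records the signed count of length-one edge super-walks, so the whole argument boils down to the base case and a bookkeeping exercise in the inductive step.

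For the base case $k=1$, I would compute $\Delta^-_\Gamma(i,j) = \sum_{v} I(v,i) I(v,j)$ entry-by-entry from the definition of the incidence matrix. If $i = j$ with endpoints $A$ and $B$, only these two vertices contribute, each giving $I(v,i)^2 = 1$, so $\Delta^-_\Gamma(i,i) = 2$; this matches the two positively-signed edge super-walks from $e_i$ to itself, one through each endpoint (Definition's case (2)). If $i \neq j$, the sum is taken over vertices shared by $e_i$ and $e_j$, and each shared vertex $v$ contributes $I(v,i)I(v,j)$, which is $+1$ exactly when the sign of $v$ in $e_i$ and in $e_j$ agree, and $-1$ otherwise---this is precisely the sign rule of case (1). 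If $i$ and $j$ share no vertex, no length-one edge super-walk exists and $\Delta^-_\Gamma(i,j) = 0$. Handling parallel edges (two shared vertices) is automatic: both walks are counted and their signs add.

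For the inductive step, assuming the formula for $k$, I would expand
\begin{align*}
(\Delta^-_\Gamma)^{k+1}(i,j)
&= \sum_{q=1}^{|E|} (\Delta^-_\Gamma)^{k}(i,q)\, \Delta^-_\Gamma(q,j) \\
&= (\Delta^-_\Gamma)^{k}(i,j)\,\Delta^-_\Gamma(j,j) + \sum_{q \neq j} (\Delta^-_\Gamma)^{k}(i,q)\,\Delta^-_\Gamma(q,j),
\end{align*}
and interpret each piece combinatorially. The first term pairs every length-$k$ edge super-walk $\gamma$ from $e_i$ to $e_j$ with one of the two length-one loops at $e_j$ (both positive), producing exactly the signed count of length-$(k+1)$ walks whose final step is a case-(2) loop at $e_j$. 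The second term pairs every length-$k$ walk from $e_i$ to $e_q$ with one of the length-one transitions $e_q \to e_j$ through a shared vertex, and the base case already guarantees that $\Delta^-_\Gamma(q,j)$ is the signed count of such transitions. Since the sign of a super-walk is multiplicative, summing over $q$ and over all such last steps gives $\sum_{\gamma, i\to j, k+1} \mathrm{sgn}(\gamma)$, completing the induction.

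The main obstacle I anticipate is ensuring the sign rule of Definition~\ref{signdef} agrees \emph{exactly} with the product $I(v,i)I(v,j)$ in every configuration, and checking that when $e_i$ and $e_j$ are parallel edges the formula correctly counts both edge super-walks of length one (one through each shared vertex) with the correct signs. Once the base case is verified cleanly, the inductive step is essentially identical in spirit to the argument in Theorem~\ref{SW} and requires no new ideas beyond careful matrix bookkeeping.
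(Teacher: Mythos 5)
Your proposal follows essentially the same route as the paper's own proof: induction on $k$, with the base case verified entry-by-entry from $\Delta^-_\Gamma = I^tI$ (the three cases $i=j$, incident, and non-incident) and the inductive step obtained by peeling off the last step via the matrix product $\sum_q (\Delta^-_\Gamma)^k(i,q)\,\Delta^-_\Gamma(q,j)$. The argument is correct; your extra attention to parallel edges is harmless but moot here, since the paper defines edges as $2$-element subsets of $V$, so two distinct edges share at most one vertex.
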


\begin{proof}
Prove by induction on $k$. When $k=1$, we have: $(\Delta _{\Gamma }^{-})^{k}=(\Delta _{\Gamma }^{-})^{1}=I^{t}I$.

Therefore, $\Delta _{\Gamma }^{-}(i,j)=\sum_{q=1}^{n}I_{qi}I_{qj}$.

By the definition of incidence matrix, $I_{ij}=0 \text{ or } 1 \text{ or } -1$.

Case (1), if $i=j$, we have $\Delta _{\Gamma }^{-}(i,i)=\sum_{q=1}^{n}I_{qi}I_{qi}=2$.

Meanwhile, we have two edge super-walks of length 1 going from one edge to itself, and both of them are positive. Hence, $\Delta _{\Gamma }^{-}(i,i)=2$ follows the claim. The claim holds in case (1).

Case (2), if $i\neq j$, and edge $i$ is not incident to edge $j$. It follows that if $I_{qi}$ is non-zero, then $I_{qj}$ must be zero. Hence, $\Delta _{\Gamma }^{-}(i,j)=\sum_{q=1}^{n}I_{qi}I_{qj}=0$.

Meanwhile, we cannot go from edge $i$ to edge $j$ of length 1 if two edges have no intersection. Hence $\Delta _{\Gamma }^{-}(i,j)=0$ follows the claim. The claim holds in case (2).

Case (3), if $i\neq j$, and edge $i$ intersects edge $j$ at vertex $A$. We have: $\Delta _{\Gamma }^{-}(i,j)=\sum_{q=1}^{n}I_{qi}I_{qj}=\pm 1$.

Since only non-zero items contribute to the result, if the sign of $A$ in $i$ is the same as sign of $A$ in $j$, then we get $(+1)$; if the sign of $A$ in $i$ is different from sign of $A$ in $j$, then we get $(-1)$.

Meanwhile, if the sign of $A$ in $i$ and the sign of $A$ in $j$ are the same, then both edges end at $A$ or start at $A$. We have one edge super-walk of length 1 going from edge $i$ to edge $j$, and it is positive. Hence, $\Delta _{\Gamma }^{-}(i,j)=1$ follows the claim. Also, if the sign of $A$ in $i$ and the sign of $A$ in $j$ are different, then one of the edges ends at $A$, and the other one starts at $A$. We have one edge super-walk of length 1 going from edge $i$ to edge $j$, and it is negative. Hence, $\Delta _{\Gamma }^{-}(i,j)=-1$ follows the claim. The claim holds in case (3).

Therefore, the claim holds when $k=1$.

Suppose the claim holds for some natural number $k$. Consider $k+1$.

\begin{align*}(\Delta {^{-}_{\Gamma }})^{k+1}&=(\Delta {^{-}_{\Gamma }})^{k}\cdot (\Delta {^{-}_{\Gamma }})^{1}\\
\text{So, }(\Delta {^{-}_{\Gamma }})^{k+1}(i,j)&=\sum_{q=1}^{n}(\Delta {^{-}_{\Gamma }})^{k}(i,q)(\Delta ^{-}_{\Gamma })(q,j) \\
&=(\Delta _{\Gamma }^{-})^{k}(i,j)\cdot (\Delta _{\Gamma }^{-})(j,j)+\overbrace{\sum_{q=1}^{n}(\Delta _{\Gamma }^{-})^{k}(i,q)\cdot (\Delta _{\Gamma }^{-})(q,j)}^{\text{where } q \text{ and } j \text{ adjacent}}+\overbrace{\sum_{q=1}^{n}(\Delta _{\Gamma }^{-})^{k}(q,j)\cdot 0}^{\text{where } q \text{ and } j \text{ non-adjacent}} \\ 
&=\sum_{\gamma ,i\rightarrow j,k} \text{sgn}(\gamma )\cdot (\Delta _{\Gamma }^{-}) (j,j)+\sum_{\gamma ,i\rightarrow j,k} \text{sgn}(\gamma )\cdot (\Delta _{\Gamma }^{-}) (q,j)+0.
\end{align*}

Let $\sum_{\gamma ,i\rightarrow j,k} \text{sgn}(\gamma )\cdot (\Delta _{\Gamma }^{-}) (j,j)=X$. In $X$, we first count the edge super-walks from edge $i$ to edge $j$ of length $k$ by calculating $\sum_{\gamma ,i\rightarrow j,k} \text{sgn}(\gamma )$. Now we stand at edge $j$, and have to go from edge $j$ to edge $j$ of length 1. The number of the last step is 2, because every edge has two ends. Since $(\Delta _{\Gamma }^{-}) (j,j)$ is 2 for all $j$, $X$ calculates $\sum_{\gamma} \text{sgn}(\gamma )$ where $\gamma$ is the edge super-walks from edge $i$ to edge $j$ of length $k+1$, and the last step going from edge $j$ to edge $j$.

Let $\sum_{\gamma ,i\rightarrow j,k} \text{sgn}(\gamma )\cdot (\Delta _{\Gamma }^{-}) (q,j)=Y$. In $Y$, we first count the edge super-walks from edge $i$ to edge $q$ of length $k$ by calculating $\sum_{\gamma ,i\rightarrow q,k} \text{sgn}(\gamma )$. Now we stand at edge $q$, an edge connected to the edge $j$, and have to go from edge $q$ to edge $j$ of length 1. There is only one way to go from edge $q$ to edge $j$ of length 1, and its sign can be negative or positive. That is $\sum_{\gamma ,q\rightarrow j,1} \text{sgn}(\gamma )=-1=(\Delta _{\Gamma }^{-}) (q,j)$, or $\sum_{\gamma ,q\rightarrow j,1} \text{sgn}(\gamma )=1=(\Delta _{\Gamma }^{-}) (q,j)$. Hence, $Y$ calculates $\sum_{\gamma} \text{sgn}(\gamma )$ where $\gamma$ is the edge super-walks from edge $i$ to edge $j$ of length $k+1$, and the last step going from edge $q$ to edge $j$.

Therefore, $X+Y$ calculates $\sum_{\gamma ,i\rightarrow j,k+1} \text{sgn}(\gamma )$. Hence, the claim holds when $k$ is replaced by $k+1$.

This is the end of the proof.

\end{proof}

\section{Conclusion and Perspectives}

The combinatorial interpretation of the Laplacian has a physical counterpart. In quantum mechanics, we have the evolution of quantum states $\psi \in \mathcal H $ which is calculated via the so called \emph{Schr\"odinger equation:} $\frac{\partial }{\partial t}\psi=\mathbb{H}\psi$, where $\mathbb{H}$ is the Schr\"odinger operator. One can also calculate it in another way by looking at the wave equation. $\psi(y,t_{1})=\int_{\mathbb{R}}k(t_{1}-t_{0},x,y)\psi(x,t_{0})dx$. This integral is the \textit{heat integral kernel}, where $k$ is \textit{a propagator of the quantum system}. In order to compute such heat kernel, Feynman proposed the following description: $k(t_{1}-t_{0},x,y)=\int_{\mathbb{\gamma}}\mathcal{D}(\Gamma)e^{i/\hbar}S_{cl}(\gamma)$, where the integral is over paths $\gamma$ in $\mathbb{R}$, and $\mathcal{D}(\Gamma)$ is a measure on the space of walks. The two types of super-walks and their theorems given by this paper show the discrete form of the walk expansion for the partition function of the super-symmetric version of graph quantum mechanics. In this combinatorial model of quantum mechanics, we know that $\psi_{t_{1}}=exp((t_{0}-t_{1})\Delta)\psi_{t_{0}}$, where $\Delta$ is the super-Laplacian (the direct sum of the even and odd graph Laplacians). Thus we can calculate $\psi$ by summing all $\psi_{t}$'s: $\psi = \int_{\mathbb{R}}\sum_{\gamma}\frac{t^{\left | \gamma \right |}}{\left | \gamma \right |!}e^{-kt}\psi(x,t_{0})dx$, where $\gamma$ are all the super-walks defined in this paper.\\

The formulae in this paper also suggest a natural extension of walks to CW-complexes, with a given orientation, and such expressions would lead to a combinatorial interpretation of the partition function of super-symmetric quantum mechanics for CW-complexes. In del Vecchio's work, walk sum formulae and gluing formulae are derived for the case of graph quantum mechanics , and it is conjectured that this approach is still useful in the more general setting of Laplacians for CW-complexes \cite{delVeccio}. In this case the graph Laplacian is replaced by the Hodge Laplacian $\left \langle u|(\Delta_{X})^{k}|v \right \rangle = \left \langle u|(d^{*}_{X}d_{X})^{k}|v \right \rangle +\left \langle u|(d_{X}d^{*}_{X})^{k}|v \right \rangle$, where $X$ is the CW-complex such that $\Delta_{X}:\bigoplus _{i=0}^{n}Span_{\mathbb{C}}(C_{i}(X))\rightarrow Span_{\mathbb{C}}(C_{i}(X))$. Since any graph is a 1-dimensional CW-complex, one can extend the two types of super-walks defined in this paper to CW-complexes so that $\left \langle u|(\Delta_{X})^{k}|v \right \rangle = \sum_{\gamma}\text{sgn}(\gamma)$, where $X$ is the CW-complex, $u$ and $v$ are cells, $\gamma$ are the super-walks from $u$ to $v$ of length $k$. The super-walks for CW-complexes correspond to sequences of cells with alternating parity in dimension.\\


\begin{thebibliography}{10}
\bibitem{Boyan} I. Contreras, \& B. Xu, \emph{Graph quantum mechanics and discrete Morse theory}, preprint. https://arxiv.org/abs/1704.08354 (2017).
\bibitem{delVeccio} S. Del Vecchio, \emph{Path sum formulae for propagators on graphs, gluing and continuum limit}, ETH master thesis (2012).
\bibitem{Mnev1} P. Mnev, \emph{Discrete path integral approach to the trace formula for regular graphs}, Commun. Math. Phys. 274.1 (2007) 233-241.
\bibitem{Mnev2} P. Mnev, \emph{Quantum mechanics on graphs}, contribution to the MPIM Jahrbuch (2016).

\end{thebibliography}
\end{document}